
\documentclass[12pt]{amsart}
\usepackage{amssymb,latexsym,verbatim}
\theoremstyle{plain}
\newtheorem{thm}{Theorem}[section]
\newtheorem{lem}[thm]{Lemma}
\newtheorem{cor}[thm]{Corollary}
\newtheorem{prop}[thm]{Proposition}

\theoremstyle{definition}

\theoremstyle{remark}

\newtheorem{rem}[thm]{Remark}

\DeclareMathOperator{\Ann}{Ann}
\DeclareMathOperator{\Tor}{Tor}
\DeclareMathOperator{\Ext}{Ext}
\DeclareMathOperator{\Supp}{Supp}
\DeclareMathOperator{\V}{V}
\DeclareMathOperator{\Hom}{Hom}
\DeclareMathOperator{\Ker}{Ker}
\DeclareMathOperator{\Coker}{Coker}
\DeclareMathOperator{\Image}{Im}

\DeclareMathOperator{\cd}{cd}

\DeclareMathOperator{\lc}{H}

\DeclareMathOperator{\G}{\Gamma}

\DeclareMathOperator{\ara}{ara}

\newcommand{\lo}{\longrightarrow}
\newcommand{\fa}{\mathfrak{a}}
\newcommand{\fb}{\mathfrak{b}}

\newcommand{\fm}{\mathfrak{m}}
\newcommand{\fp}{\mathfrak{p}}

\newcommand{\fc}{\mathfrak{c}}

\begin{document}

\title[Artinianness of local cohomology modules ]
{Artinianness of local cohomology modules  \\}

\author{Moharram Aghapournahr}
\address{ Moharram Aghapournahr\\  Arak University, Beheshti St, P.O. Box:879, Arak, Iran}
\email{m-aghapour@araku.ac.ir}

\author{Leif Melkersson}
\address{Leif Melkersson\\Department of Mathematics\\ Link\"{o}ping University\\ S-581 83 Link\"{o}ping\\ Sweden}

\email{lemel@mai.liu.se}

\keywords{Local cohomology, artinian modules, cofinite modules.\\}

\subjclass[2000]{13D45, 13D07}

\begin{abstract}
 Let $A$ be a noetherian ring, $\fa$ an ideal of $A$, and $M$ an $A$--module. Some uniform theorems on the artinianness
 of certain local cohomology modules are proven in a general situation. They generalize and imply previous results about
  the artinianness of some special local cohomology modules in the graded case.
\end{abstract}

\maketitle

\section{Introduction}
Throughout $A$ is a commutative noetherian ring. As a general reference to homological and commutative algebra we use
\cite{BH} and \cite{Mat}. One of the main problems in the study of local cohomology modules is to determine when they
are artinian. Recently some results have been proved about the artinianness of graded local cohomology modules in
\cite{BFT}, \cite{BRS}, \cite{S1} and \cite{S2}.

We prove some uniform results about artinianness of local cohomology modules in the context of an arbitrary noetherian
ring $A$. Their proofs are simple and they have those special cases in the above references as immediate consequences.

A Serre subcategory of the category of $A$--modules is a full subcategory closed under taking submodules, quotient
 modules and extensions. An example is given by the class of artinian $A$--modules. A useful method to prove that
 a certain module belongs to such a Serre subcategory is to apply \cite[Lemma 3.1]{Mel}.

An $A$--module $M$ is called {\it $\fa$--cofinite} if $\Supp_A(M)\subset \V{(\fa)}$ and $\Ext^i_{A}(A/\fa,M)$ is
finite (finitely generated) for all $i$. This notion was introduced by Hartshorne in \cite{Ha}. For more information
 about cofiniteness with respect to an ideal, see \cite{HK}, \cite{DM} and \cite{Mel}.


\section{Main results}

\begin{thm}\label{P:loc/aloc}
Let $\mathcal S$ be a Serre subcategory of the category of $A$--modules. Let $M$ be a finite $A$ module and let $\fa$
 be an ideal of $A$ such that $\lc^{i}_{\fa}(M)$ belongs to $\mathcal S$ for all $i> n$. If $\fb$ is an ideal of $A$
 such that $\lc^{n}_{\fa}(M/{\fb}M)$ belongs to $\mathcal S$, then the module $\lc^{n}_{\fa}(M)/{\fb}\lc^{n}_{\fa}(M)$
 belongs to $\mathcal S$.
\end{thm}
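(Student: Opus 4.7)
The plan is to use the long exact sequence from $0 \to \fb M \to M \to M/\fb M \to 0$ to reduce the theorem to placing a certain submodule of $\lc^{n+1}_\fa(K)$ in $\mathcal S$, where $K$ is the kernel of a presentation $M^r \twoheadrightarrow \fb M$, and then to handle this by an iterated syzygy argument.

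First I would apply local cohomology to $0 \to \fb M \to M \to M/\fb M \to 0$ to obtain
\[
\cdots \to \lc^n_\fa(\fb M) \xrightarrow{\phi} \lc^n_\fa(M) \xrightarrow{g} \lc^n_\fa(M/\fb M) \to \lc^{n+1}_\fa(\fb M) \to \lc^{n+1}_\fa(M) \to \cdots.
\]
Since $\lc^n_\fa(M/\fb M) \in \mathcal S$ is annihilated by $\fb$, the map $g$ factors through $\lc^n_\fa(M)/\fb \lc^n_\fa(M)$, so $\fb \lc^n_\fa(M) \subseteq \ker(g) = \mathrm{Im}(\phi)$, and $\mathrm{Im}(g) \in \mathcal S$ as a submodule of $\lc^n_\fa(M/\fb M)$. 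This gives the short exact sequence
\[
0 \to \mathrm{Im}(\phi)/\fb \lc^n_\fa(M) \to \lc^n_\fa(M)/\fb \lc^n_\fa(M) \to \mathrm{Im}(g) \to 0,
\]
reducing the theorem to showing $\mathrm{Im}(\phi)/\fb \lc^n_\fa(M) \in \mathcal S$.

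Next I would choose generators $x_1, \dots, x_r$ of $\fb$ and set $K := \ker\bigl(M^r \xrightarrow{(x_i)} \fb M\bigr)$, producing the long exact sequence
\[
\lc^n_\fa(M)^r \xrightarrow{\phi'} \lc^n_\fa(\fb M) \xrightarrow{\partial} \lc^{n+1}_\fa(K) \to \lc^{n+1}_\fa(M)^r.
\]
The composition $\phi \circ \phi'$ sends $(\xi_i)$ to $\sum_i x_i \xi_i$, so its image is precisely $\fb \lc^n_\fa(M)$. Consequently $\mathrm{Im}(\phi)/\fb \lc^n_\fa(M)$ is a quotient of $\mathrm{coker}(\phi') = \mathrm{Im}(\partial) \subseteq \lc^{n+1}_\fa(K)$, and it is enough to show $\lc^{n+1}_\fa(K) \in \mathcal S$.

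The main obstacle is this last step, since $K$ is not one of the modules on which we have a direct hypothesis. I would resolve it by iterating: set $K_1 := K$ and $K_{p+1} := \ker\bigl(M^{r_{p+1}} \twoheadrightarrow K_p\bigr)$ for $p \geq 1$. The long exact sequence of $0 \to K_{p+1} \to M^{r_{p+1}} \to K_p \to 0$, combined with $\lc^j_\fa(M) \in \mathcal S$ for $j > n$, yields the implication
\[
\lc^{n+p+1}_\fa(K_{p+1}) \in \mathcal S \ \Longrightarrow\ \lc^{n+p}_\fa(K_p) \in \mathcal S \qquad (p \ge 1),
\]
because $\lc^{n+p}_\fa(K_p)$ becomes an extension of a quotient of $\lc^{n+p}_\fa(M)^{r_{p+1}} \in \mathcal S$ by a submodule of $\lc^{n+p+1}_\fa(K_{p+1})$. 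Since $\fa$ is finitely generated, $\lc^j_\fa(N) = 0$ for every $A$-module $N$ once $j$ exceeds the arithmetic rank of $\fa$, so $\lc^{n+k+1}_\fa(K_{k+1}) = 0 \in \mathcal S$ for $k$ large enough; descending the implication through $p = k, k-1, \ldots, 1$ gives $\lc^{n+1}_\fa(K) \in \mathcal S$, and hence $\lc^n_\fa(M)/\fb \lc^n_\fa(M) \in \mathcal S$.
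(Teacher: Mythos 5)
Your reduction in the first two steps is correct and is actually a cleaner route than the paper's: you show directly that $\lc^{n}_{\fa}(M)/\fb\lc^{n}_{\fa}(M)$ sits in a short exact sequence with $\operatorname{Im}(g)\subseteq \lc^{n}_{\fa}(M/\fb M)\in\mathcal S$ on the right and, on the left, a quotient of a submodule of $\lc^{n+1}_{\fa}(K)$, where $K=\ker(M^{r}\to\fb M)$. (The paper instead runs a Noetherian induction over submodules $N\subseteq M$, picking a maximal bad $N$ and deriving a contradiction.) So everything now hinges on proving $\lc^{n+1}_{\fa}(K)\in\mathcal S$.

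That is exactly where there is a genuine gap. Your iteration defines $K_{p+1}:=\ker\bigl(M^{r_{p+1}}\twoheadrightarrow K_{p}\bigr)$, but such a surjection from a finite power of $M$ onto $K_{p}$ need not exist. The module $K_{p}$ lives over $\bar A:=A/\operatorname{Ann}(M)$, and $M$ is a faithful $\bar A$-module, but a faithful module need not be a generator of the module category. Concretely, take $A=k[x,y]$, $M=(x,y)$, $\fb=(x,y)$, $r=2$: the Koszul relation gives $K_{1}=\ker\bigl(M^{2}\to\fb M\bigr)\cong A$, yet the trace ideal of $M$ in $A$ is $(x,y)\neq A$, so $A$ is not a homomorphic image of any $M^{s}$. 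Thus the recursion cannot even start in this example, and the claim "$\lc^{n+p+1}_{\fa}(K_{p+1})\in\mathcal S \Rightarrow \lc^{n+p}_{\fa}(K_{p})\in\mathcal S$" rests on an object that may not exist.

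The missing ingredient is a Gruson-type statement: since $K\subseteq M^{r}$ we have $\operatorname{Supp}(K)\subseteq\operatorname{Supp}(M)$, and by Gruson's theorem (applied over $\bar A$, where $M$ is faithful) $K$ admits a finite filtration whose successive quotients \emph{are} quotients of finite powers of $M$; running your vanishing-by-$\ara(\fa)$ descent on each filtration quotient and chasing the long exact sequences then yields $\lc^{i}_{\fa}(K)\in\mathcal S$ for all $i>n$. This is precisely the content of \cite[Theorem~3.1]{AMel}, which the paper invokes and which your argument is, in effect, trying to reprove without the filtration. If you simply cite that result (or Gruson's theorem) to obtain $\lc^{n+1}_{\fa}(K)\in\mathcal S$, your proof closes and is, if anything, more transparent than the paper's Noetherian induction; without it, the surjection step is unjustified and the argument does not go through.
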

\begin{proof}
Suppose $\lc^{n}_{\fa}(M)/{\fb}\lc^{n}_{\fa}(M)$ is not in $\mathcal S$. Let $N$ be a maximal submodule of $M$ such
that $\lc^{n}_{\fa}(M/N)\otimes_A{A/\fb}$ is not in $\mathcal S$. Let $L\supset N$ be such that $\G_{\fb}(M/N)=L/N$.
Since $\Supp_R(L/N)\subset{\V(\fb)\cap \Supp_R(M)}$,\ $\lc^{i}_{\fa}(L/N)$ belongs to $\mathcal S$ for all $i\geq n$
by \cite[Theorem 3.1]{AMel}.

From the exact sequence $0 \rightarrow L/N\rightarrow M/N\rightarrow M/L\rightarrow 0$, we get the exact sequence
\begin{center}
$\lc^{n}_{\fa}(L/N)\lo \lc^{n}_{\fa}(M/N)\overset{f}\lo \lc^{n}_{\fa}(M/L)\lo \lc^{n+1}_{\fa}(L/N)$.
\end{center}
$\Tor^A_{i}(A/\fb,\Ker f)$ and $\Tor^A_{i}(A/\fb,\Coker f)$ are in
$\mathcal S$ for all $i$, because $\Ker f$ and $\Coker f$ are in
$\mathcal S$. It follows from \cite[Lemma 3.1]{Mel}, that $\Ker
(f\otimes{A/\fb})$ and $\Coker (f\otimes{A/\fb})$ are in $\mathcal
S$. Since $\lc^{n}_{\fa}(M/N)\otimes_A{A/\fb}$ is not in $\mathcal
S$,
 the module $\lc^{n}_{\fa}(M/L)\otimes_A{A/\fb}$ can not be in $\mathcal S$. By the maximality of $N$, we get $L=N$.
  We have shown that $\G_{\fb}(M/N)=0$ and therefore we can take $x\in \fb$ such that the sequence
   $0\rightarrow M/N\overset{x}\rightarrow M/N\rightarrow M/(N+{x}M)\rightarrow 0$ is exact. Thus we get the exact
   sequence
\begin{center}
$\lc^{n}_{\fa}(M/N)\overset{x}\lo \lc^{n}_{\fa}(M/N)\lo \lc^{n}_{\fa}(M/N+{x}M)\lo \lc^{n+1}_{\fa}(M/N).$
\end{center}
 This yields the exact sequence
\begin{center}
$0\lo \lc^{n}_{\fa}(M/N)/{x}\lc^{n}_{\fa}(M/N)\lo \lc^{n}_{\fa}(M/N+{x}M)\lo C\lo 0$,
\end{center}
where $C\subset \lc^{n+1}_{\fa}(M/N)$ and thus $C$ is in $\mathcal S$.

Note that $x\in \fb$. Hence we get the exact sequence
\begin{center}
$\Tor^A_{1}(A/\fb,C)\lo \lc^{n}_{\fa}(M/N)\otimes_A{A/\fb}\lo \lc^{n}_{\fa}(M/(N+{x}M))\otimes_A{A/\fb}$
\end{center}
However $N\subsetneqq{(N+{x}M)}$ and therefore $\lc^{n}_{\fa}(M/(N+{x}M))\otimes_A{A/\fb}$ belongs to $\mathcal S$ by
the maximality of $N$. Consequently $\lc^{n}_{\fa}(M/N)\otimes_A{A/\fb}$ is in $\mathcal S$ which is a contradiction.
\end{proof}
\begin{cor}\label{T:H/bH}
Let $\fa$ and $\fb$ be two ideals of $A$. Let $M$ be a finite $A$--module. If $\lc^{i}_{\fa}(M)$ is artinian for
$i> n$ and $\lc^{n}_{\fa}(M/{\fb}M)$ is artinian, then $\lc^{n}_{\fa}(M)/{\fb}\lc^{n}_{\fa}(M)$ is artinian.
\end{cor}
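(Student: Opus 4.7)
The plan is to derive this corollary as a direct instance of Theorem~\ref{P:loc/aloc}. The only thing needed is to exhibit a Serre subcategory $\mathcal S$ of the category of $A$--modules for which the phrase ``belongs to $\mathcal S$'' means ``is artinian''. So the first step is to take $\mathcal S$ to be the class of all artinian $A$--modules.

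Next I would verify, briefly, that $\mathcal S$ is indeed a Serre subcategory: a submodule of an artinian module is artinian, a quotient of an artinian module is artinian, and given a short exact sequence $0 \to M' \to M \to M'' \to 0$ with $M'$ and $M''$ artinian, the middle term $M$ is artinian (any descending chain of submodules of $M$ eventually stabilizes after intersecting with $M'$ and projecting to $M''$). These are standard facts and require no serious calculation.

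With $\mathcal S$ identified, the hypotheses of the corollary translate verbatim into the hypotheses of Theorem~\ref{P:loc/aloc}: $\lc^i_\fa(M) \in \mathcal S$ for all $i>n$ because these modules are artinian, and $\lc^n_\fa(M/\fb M) \in \mathcal S$ by assumption. Applying the theorem then gives that $\lc^n_\fa(M)/\fb\lc^n_\fa(M) \in \mathcal S$, which is exactly the conclusion that this module is artinian.

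There is essentially no obstacle here; the whole content of the corollary is the observation that artinian modules form a Serre subcategory, after which Theorem~\ref{P:loc/aloc} applies mechanically. The only place one might pause is to confirm that no additional hypothesis (such as support conditions) is hidden in the theorem statement — a quick check of Theorem~\ref{P:loc/aloc} shows that the only inputs are the Serre subcategory property of $\mathcal S$ and the two vanishing/artinianness conditions, both of which we have.
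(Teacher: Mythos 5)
Your proof is correct and is exactly the paper's own argument: take $\mathcal S$ to be the Serre subcategory of artinian $A$--modules and apply Theorem~\ref{P:loc/aloc}. The extra verification that artinian modules form a Serre subcategory is standard and the paper takes it for granted.
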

\begin{proof}
In \ref{P:loc/aloc} take $\mathcal S$ as the category of artinian $A$--modules.
\end{proof}
\begin{cor}\label{C:H/bH}
Let $\fa$ and $\fb$ be two ideals of $A$ such that $A/{\fa}+\fb$ is artinian. If $\lc^{i}_{\fa}(M)$ is artinian for
 $i> n$, then $\lc^{n}_{\fa}(M)/{\fb}\lc^{n}_{\fa}(M)$ is artinian.
\end{cor}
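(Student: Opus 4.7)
The plan is to reduce the statement to the preceding Corollary~\ref{T:H/bH}. That corollary has two hypotheses: $\lc^{i}_{\fa}(M)$ artinian for $i>n$, and $\lc^{n}_{\fa}(M/\fb M)$ artinian. The first is exactly what is assumed in the statement, so everything comes down to verifying the second.

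To do this, I would set $N := M/\fb M$, a finite $A$-module annihilated by $\fb$. The standard fact that $\lc^{i}_{\fa}(N)$ depends only on the ideal up to radical modulo $\Ann N$ then gives $\lc^{i}_{\fa}(N) = \lc^{i}_{\fa+\fb}(N)$ for every $i$; for $i=0$ this is immediate from $\fb N = 0$, and it extends to the higher cohomology in the usual way. By hypothesis $A/(\fa+\fb)$ is artinian, so $V(\fa+\fb)$ is a finite set of maximal ideals $\{\fm_1,\dots,\fm_r\}$. A classical theorem (in essence Grothendieck's artinianness result at a maximal ideal, extended to the dimension-zero case; compare \cite{Mel}) says that local cohomology of a finite module with respect to an ideal $\fc$ with $\dim A/\fc = 0$ is always artinian. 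One can see this concretely by splitting $\lc^{j}_{\fa+\fb}(N)$ as $\bigoplus_{k}\lc^{j}_{\fm_k}(N)$ and reducing each summand via completion to the classical local-complete situation. Applied to $N$ and $i=n$, this yields that $\lc^{n}_{\fa+\fb}(N) = \lc^{n}_{\fa}(M/\fb M)$ is artinian.

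With both hypotheses of Corollary~\ref{T:H/bH} now in place, the conclusion that $\lc^{n}_{\fa}(M)/\fb\lc^{n}_{\fa}(M)$ is artinian is immediate. The only conceptual step — the identification $\lc^{i}_{\fa}(N)=\lc^{i}_{\fa+\fb}(N)$ on a module killed by $\fb$ — is standard, so I do not anticipate a real obstacle; this corollary should read as an almost-formal consequence of the artinian version of Theorem~\ref{P:loc/aloc} combined with the classical dimension-zero artinianness result.
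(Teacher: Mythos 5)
Your proposal is correct and follows the paper's own route exactly: reduce to Corollary~\ref{T:H/bH} by observing that $\lc^{n}_{\fa}(M/\fb M)\cong\lc^{n}_{\fa+\fb}(M/\fb M)$ on a module killed by $\fb$, and then invoke artinianness of local cohomology of a finite module with respect to an ideal $\fc$ with $A/\fc$ artinian. The paper states this as a one-line remark; you have merely spelled out the same identification and the underlying classical dimension-zero artinianness result.
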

\begin{proof}
Note that $\lc^{n}_{\fa}(M/{\fb}M)\cong \lc^{n}_{\fa+{\fb}}(M/{\fb}M)$ is artinian.
\end{proof}
\begin{cor}\label{C:H/aH}
If $\lc^{i}_{\fa}(M)$ is artinian for $i> n$, where $n\geq 1$ then \\$\lc^{n}_{\fa}(M)/{\fa}\lc^{n}_{\fa}(M)$ is
artinian.
\end{cor}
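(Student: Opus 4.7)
The plan is to deduce this directly from Corollary \ref{T:H/bH} by specializing $\fb = \fa$. With that choice, the hypothesis of \ref{T:H/bH} requires two things: that $\lc^{i}_{\fa}(M)$ is artinian for $i > n$ (which is given), and that $\lc^{n}_{\fa}(M/\fa M)$ is artinian. So the only real task is to verify the second condition.

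For this, I would observe that $M/\fa M$ is annihilated by $\fa$, hence is an $\fa$-torsion module (in fact, $\G_{\fa}(M/\fa M) = M/\fa M$). A standard fact about local cohomology is that an $\fa$-torsion module has an injective resolution consisting of $\fa$-torsion injectives (since for any injective envelope $E$ containing an $\fa$-torsion module, the submodule $\G_{\fa}(E)$ is itself injective and still contains the module). Consequently $\G_{\fa}$ acts as the identity on such a resolution, so $\lc^{i}_{\fa}(M/\fa M) = 0$ for all $i \geq 1$. In particular, since the hypothesis $n \geq 1$ is in force, $\lc^{n}_{\fa}(M/\fa M) = 0$, which is trivially artinian.

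With both hypotheses of Corollary \ref{T:H/bH} verified, the conclusion that $\lc^{n}_{\fa}(M)/\fa \lc^{n}_{\fa}(M)$ is artinian follows immediately. There is no serious obstacle here; the only subtlety is noticing that the role of $\fb$ can be played by $\fa$ itself and that the condition $n \geq 1$ is exactly what makes the higher local cohomology of the $\fa$-torsion module $M/\fa M$ vanish.
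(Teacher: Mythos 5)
Your proposal is correct and matches the paper's proof exactly: specialize Corollary \ref{T:H/bH} to $\fb = \fa$ and observe that $\lc^{n}_{\fa}(M/\fa M) = 0$ for $n \geq 1$ since $M/\fa M$ is $\fa$-torsion. You simply spell out the standard vanishing fact in more detail than the paper does.
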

\begin{proof}
Note that $\lc^{n}_{\fa}(M/{\fa}M)= 0$ for $n\geq 1$
\end{proof}

\begin{rem}\label{R:H/aH}
In \ref{C:H/aH} we must assume that $n\geq 1$. Take any ideal $\fa$ in a ring $A$ such that $A/\fa$ is not artinian.
 Let $M=A/\fa$. Then $\lc^{i}_{\fa}(M)= 0$ for $i\geq 1$, and $\G_{\fa}(M)= M$. On the other hand $M/{\fa}M\cong M$.
  Thus $\lc^{0}_{\fa}(M)/{\fa}\lc^{0}_{\fa}(M)$ is not artinian.
\end{rem}

\begin{cor}\label{C:loc/aloc2}
Let $\fa$ and $\fb$ be two ideals of $A$. Let $M$ be a finite $A$--module. If $\lc^{i}_{\fa}(M)$ is
$\fa$--cofinite artinian {\rm (}resp. has finite support{\rm )} for $i> n$, and $\lc^{n}_{\fa}(M/{\fb}M)$ is
 $\fa$--cofinite artinian {\rm (}resp. has finite support{\rm )} then $\lc^{n}_{\fa}(M)/{\fb}\lc^{n}_{\fa}(M)$ is
  $\fa$--cofinite artinian {\rm (}resp. has finite support{\rm )}. In particular, if $n\geq 1$ then
   $\lc^{n}_{\fa}(M)/{\fa}\lc^{n}_{\fa}(M)$ has finite length {\rm (}resp. has finite support{\rm )}.
\end{cor}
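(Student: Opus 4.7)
The plan is to apply Theorem \ref{P:loc/aloc} to two specific Serre subcategories in turn: first the class $\mathcal S_1$ of $\fa$-cofinite artinian $A$-modules, and then the class $\mathcal S_2$ of $A$-modules with finite support. Each application yields one of the two parallel assertions in the corollary, with no additional work on the local cohomology side.

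The first step is to verify that $\mathcal S_1$ and $\mathcal S_2$ are indeed Serre subcategories. For $\mathcal S_2$ this is immediate, since the support of a submodule or quotient is contained in the support of the ambient module, and for any short exact sequence $0\to L\to M\to N\to 0$ we have $\Supp M=\Supp L\cup \Supp N$, so finiteness is preserved. For $\mathcal S_1$ the verification is standard in the cofiniteness literature: artinian modules form a Serre subcategory on their own, and the long exact sequence of $\Ext_A^{\ast}(A/\fa,-)$ together with the fact that for an artinian $A$-module with support in $V(\fa)$ the $\fa$-cofiniteness condition is controlled by finiteness of $\Hom_A(A/\fa,-)$ shows that $\fa$-cofiniteness behaves well under submodules, quotients and extensions. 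I expect this bookkeeping to be the only real obstacle, but it is a routine citation from Melkersson-type results on cofinite modules.

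With the two Serre subcategories in hand, Theorem \ref{P:loc/aloc} directly gives both versions of the main claim. For the ``in particular'' part I specialize to $\fb=\fa$. Since $\fa$ annihilates $M/\fa M$, the module $M/\fa M$ is $\fa$-torsion, so $\lc^{n}_{\fa}(M/\fa M)=0$ for every $n\ge 1$; in particular it lies in both $\mathcal S_1$ and $\mathcal S_2$, so the hypothesis on $\lc^{n}_{\fa}(M/\fb M)$ is vacuously satisfied. The already-proven part of the corollary therefore tells us that $\lc^{n}_{\fa}(M)/\fa\lc^{n}_{\fa}(M)$ is $\fa$-cofinite artinian (respectively has finite support).

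Finally, I upgrade ``$\fa$-cofinite artinian'' to ``finite length'' for this quotient. The module $\lc^{n}_{\fa}(M)/\fa\lc^{n}_{\fa}(M)$ is annihilated by $\fa$, so applying $\Hom_A(A/\fa,-)$ returns the module itself; the $\fa$-cofiniteness hypothesis then says that this is a finitely generated $A$-module. A finitely generated artinian module has finite length, concluding the proof. In the finite-support case no such upgrade is claimed, and the statement already follows from the previous step.
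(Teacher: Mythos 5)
Your proposal is correct and matches the paper's intended (unwritten) argument: Corollary \ref{C:loc/aloc2} is an immediate application of Theorem \ref{P:loc/aloc} to the Serre subcategories of $\fa$-cofinite artinian modules and of modules with finite support, and the ``in particular'' clause follows by taking $\fb=\fa$ (so that $\lc^{n}_{\fa}(M/\fa M)=0$ for $n\geq 1$, as in the proof of Corollary \ref{C:H/aH}) and observing that an $\fa$-cofinite module annihilated by $\fa$ equals its own $\Hom_A(A/\fa,-)$ and is therefore finitely generated, hence of finite length when also artinian. Your verification that $\fa$-cofinite artinian modules form a Serre subcategory is the right one; the clean citation is \cite[Proposition 4.1]{Mel}, which reduces cofiniteness for artinian modules supported in $\V(\fa)$ to finiteness of $0:_M\fa$.
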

\begin{cor}\label{C:aloc=loc}
If $c=\cd(\fa,M)> 0$, then $\lc^{c}_{\fa}(M)={\fa}\lc^{c}_{\fa}(M)$.
\end{cor}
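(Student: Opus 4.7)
The plan is to deduce this as an immediate specialization of Theorem \ref{P:loc/aloc}, applied to the trivial Serre subcategory $\mathcal{S}=\{0\}$ consisting of only the zero module. I would first observe that $\{0\}$ is indeed a Serre subcategory, since the zero module has only zero as its submodule and quotient, and the middle term of an extension whose outer terms are both zero must itself be zero.

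I would then invoke Theorem \ref{P:loc/aloc} with $n=c$ and $\fb=\fa$. The first hypothesis, that $\lc^i_\fa(M)\in \mathcal{S}$ for all $i>c$, is exactly the statement $\lc^i_\fa(M)=0$ for $i>c$, which is built into the definition of the cohomological dimension $c=\cd(\fa,M)$. The second hypothesis, $\lc^c_\fa(M/\fa M)\in \mathcal{S}$, amounts to $\lc^c_\fa(M/\fa M)=0$; this follows because $\fa$ annihilates $M/\fa M$, making it an $\fa$-torsion module, so $\lc^i_\fa(M/\fa M)=0$ for every $i\geq 1$, and the standing assumption $c\geq 1$ supplies the required index.

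The theorem then concludes that $\lc^c_\fa(M)/\fa\lc^c_\fa(M)\in \mathcal{S}$, which is to say this quotient is zero; rearranged, this is precisely the desired equality $\lc^c_\fa(M)=\fa\lc^c_\fa(M)$. There is essentially no obstacle once one notices that the trivial class $\{0\}$ qualifies as a Serre subcategory, after which the entire content of the corollary is already packaged into the uniform statement of Theorem \ref{P:loc/aloc}. This fits the philosophy of the paper, whereby varying the choice of $\mathcal{S}$ (artinian modules, $\fa$-cofinite artinian modules, modules of finite support, or the zero module) recovers a whole family of known artinianness and vanishing results from a single abstract argument.
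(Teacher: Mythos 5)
Your argument is correct, and it is almost certainly the intended one: the paper states Corollary~\ref{C:aloc=loc} with no written proof, leaving the reader to recognize it as the specialization of Theorem~\ref{P:loc/aloc} to the zero Serre subcategory, exactly as you do. Your verification of the two hypotheses (vanishing of $\lc^i_\fa(M)$ for $i>c$ by definition of $\cd(\fa,M)$, and vanishing of $\lc^c_\fa(M/\fa M)$ because $M/\fa M$ is $\fa$-torsion and $c\geq 1$) is accurate, and you correctly note that the hypothesis $c>0$ is essential for the second vanishing. One thing worth making explicit, though it is implicit in the surrounding text, is that $M$ must be finitely generated for Theorem~\ref{P:loc/aloc} to apply; the corollary's statement omits this, but it is a standing assumption in this stretch of the paper.
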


As a corollary we recover Yoshida's theorem \cite[Proposition 3.1]{Yo}.
\begin{cor}\label{C:notfg}
$c=\cd(\fa,M)> 0$, then $\lc^{c}_{\fa}(M)$ is not finite.
\end{cor}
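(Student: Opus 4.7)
The plan is to derive the statement as an immediate consequence of Corollary~\ref{C:aloc=loc} together with a Nakayama-style vanishing argument. By the definition of $c=\cd(\fa,M)$ we have $\lc^i_\fa(M)=0$ for all $i>c$, so these modules are trivially artinian; since $c>0$, Corollary~\ref{C:aloc=loc} applies with $n=c$, yielding the key identity $\lc^c_\fa(M)=\fa\,\lc^c_\fa(M)$.

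Next I would argue by contradiction: suppose $\lc^c_\fa(M)$ is finitely generated. Every local cohomology module $\lc^i_\fa(M)$ is $\fa$-torsion, and a finitely generated $\fa$-torsion module is annihilated by some power $\fa^k$ of $\fa$. Iterating the identity above then gives $\lc^c_\fa(M)=\fa^k\lc^c_\fa(M)=0$, contradicting the definition of $c$ as the largest index for which the local cohomology does not vanish.

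I do not anticipate any real obstacle: the substantive content of Yoshida's theorem in this framework has already been absorbed into Corollary~\ref{C:aloc=loc}, and the remaining step is just the elementary observation that a finitely generated $\fa$-torsion module $N$ satisfying $\fa N=N$ must vanish.
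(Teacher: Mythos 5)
Your proof is correct, and it takes a slightly different final step from the paper's. The paper first reduces to the case of a local ring $(A,\fm)$ (localizing at a maximal ideal in the support of $\lc^c_\fa(M)$, noting that this preserves $\cd = c$ and that $\fa A_\fm \subseteq \fm A_\fm$) and then applies the usual Nakayama lemma to the identity from Corollary~\ref{C:aloc=loc}. You instead stay global and use the ``torsion Nakayama'' argument: a finitely generated $\fa$-torsion module is annihilated by some $\fa^k$, so $N = \fa N$ forces $N = \fa^k N = 0$. Both hinge on Corollary~\ref{C:aloc=loc} and a Nakayama-type vanishing; your version is marginally cleaner in that it sidesteps the (easy but unstated) verification that localization preserves the hypotheses, at the cost of invoking the $\fa$-torsion property of local cohomology explicitly.
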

\begin{proof}
We may assume that $(A,\fm)$ is a local ring. Use corollary \ref{C:aloc=loc} and Nakayama's lemma.
\end{proof}
\begin{prop}\label{T:cofart1}
Let $\fa$ and $\fb$ be ideals of $A$ with $A/{\fa}+\fb$ artinian. If
the module $M$ is $\fa$--cofinite, then $\lc^{i}_{\fb}(M)$ is
artinian for all $i$.
\end{prop}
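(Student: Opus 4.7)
\emph{Plan.} The approach is to apply a Melkersson-type artinianness criterion: if $A/\fc$ is artinian and $N$ is a $\fc$-torsion $A$-module, then $N$ is artinian if and only if $\Hom_A(A/\fc,N)$ is finite. Taking $\fc:=\fa+\fb$ and $N:=\lc^i_\fb(M)$, it suffices to show that $\Hom_A(A/\fc,\lc^i_\fb(M))$ is finite for every $i\ge 0$.

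Two preparatory observations are needed. First, since $\Supp M\subseteq \V(\fa)$, every element of $M$, and hence of $\lc^i_\fb(M)$, is killed by a power of $\fa$; combined with the $\fb$-torsion property of local cohomology, this gives that $\lc^i_\fb(M)$ is $\fc$-torsion. Second, $A/\fc$ is a finite $A$-module with $\Supp A/\fc\subseteq \V(\fa)$, so $\fa$-cofiniteness of $M$ propagates to show that $\Ext^n_A(A/\fc,M)$ is finite for every $n\ge 0$ (using, e.g., a filtration of $A/\fc$ by $A/\fa$-module subquotients).

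Because $\fc\supseteq \fb$, every homomorphism from $A/\fc$ into any $A$-module factors through the $\fb$-torsion submodule, so there is a Grothendieck spectral sequence
\[
E_2^{p,q}=\Ext^p_A(A/\fc,\lc^q_\fb(M))\;\Longrightarrow\;\Ext^{p+q}_A(A/\fc,M),
\]
whose abutment is finite in every degree by the previous paragraph.

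I proceed by induction on $i$ to show $\lc^i_\fb(M)$ is artinian. The base $i=0$ follows from $\Hom_A(A/\fc,\G_\fb(M))=\Hom_A(A/\fc,M)$ (finite). For the inductive step, suppose $\lc^j_\fb(M)$ is artinian for all $j<i$. Then each $\Ext^p_A(A/\fc,\lc^j_\fb(M))$ is a subquotient of a finite direct sum of copies of $\lc^j_\fb(M)$ (via a resolution of $A/\fc$ by finitely generated free modules), hence artinian; since it is annihilated by $\fc$ and $A/\fc$ is artinian, it is an artinian module over an artinian ring, hence of finite length, hence finite. Consequently, the outgoing differentials $d_r^{0,i}\colon E_r^{0,i}\to E_r^{r,i-r+1}$ for $r\ge 2$ all land in finite modules (zero when $r\ge i+2$), and since there are no nonzero incoming differentials at $E_r^{0,i}$, each successive quotient $E_r^{0,i}/E_{r+1}^{0,i}$ is finite and $E_r^{0,i}$ stabilises at $r=i+2$ to $E_\infty^{0,i}$, itself a subquotient of the finite $\Ext^i_A(A/\fc,M)$. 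Therefore $E_2^{0,i}=\Hom_A(A/\fc,\lc^i_\fb(M))$ is finite, and the Melkersson criterion closes the induction.

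The main obstacle is shepherding the inductive hypothesis through the spectral sequence cleanly: the key leverage is the artinianness of $A/\fc$, which forces artinian $A/\fc$-modules to be of finite length and thus upgrades the relevant $\Ext$ entries from merely "artinian" to genuinely "finite", without which the differentials could not be used to bound $E_2^{0,i}$.
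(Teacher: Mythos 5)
Your proof is correct, but it takes a genuinely different route from the paper's. The paper's proof is a two-line invocation of \cite[Theorem 5.5]{Mel}: since $M$ is $\fa$--cofinite it has finite Bass numbers at all primes, in particular at all primes containing $\fa+\fb$; since $A/(\fa+\fb)$ is artinian, Melkersson's equivalence (i)$\Leftrightarrow$(iv) of that theorem gives that $\lc^i_{\fa+\fb}(M)=\lc^i_\fb(M)$ is artinian for all $i$ (the identification of the two local cohomologies uses $\Supp M\subseteq\V(\fa)$). You instead work from the more elementary artinianness criterion (a $\fc$--torsion module with $A/\fc$ artinian is artinian iff its $\fc$--socle is finite, which is \cite[Proposition 4.1]{Mel}, cited elsewhere in the paper), and then supply the finiteness of $\Hom_A(A/\fc,\lc^i_\fb(M))$ by running the Grothendieck spectral sequence $\Ext^p_A(A/\fc,\lc^q_\fb(M))\Rightarrow\Ext^{p+q}_A(A/\fc,M)$ inductively, using the artinian-plus-killed-by-$\fc$-implies-finite-length upgrade to control the differentials. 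In effect you re-prove the relevant implication of Theorem~5.5 in the special case needed, via the socle criterion and a spectral sequence, rather than citing it; this is longer and more self-contained, trading the finite-Bass-numbers input for the finiteness of $\Ext^i_A(A/\fc,M)$ and the explicit edge analysis. Both arguments are sound and rest ultimately on the same two pillars — cofiniteness propagates finiteness of $\Ext$ against modules supported in $\V(\fa)$, and artinian quotient rings force artinian modules to have finite length — so the difference is one of packaging: the paper outsources the bookkeeping to Theorem~5.5, while you do it by hand.
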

\begin{proof}
Use \cite[Theorem 5.5 (i)$\Leftrightarrow$(iv)]{Mel} and the fact that an $\fa$--cofinite module has finite Bass numbers.
\end{proof}

\begin{cor}
Let $\fa$ and $\fb$ be two ideals of $A$ such that $A/{\fa}+\fb$ is artinian. If $\fa$ is a principle ideal and $M$
is a finite module, then $\lc^{i}_{\fb}(\lc^{j}_{\fa}(M))$ is artinian for all $i, j$.
\end{cor}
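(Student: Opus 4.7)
The plan is to reduce the statement to a single application of Proposition \ref{T:cofart1}. I will show that $\lc^{j}_{\fa}(M)$ is $\fa$--cofinite for every $j$; once this is established, Proposition \ref{T:cofart1} applied with $\lc^{j}_{\fa}(M)$ in place of $M$ and the same ideals $\fa$ and $\fb$ immediately yields that $\lc^{i}_{\fb}(\lc^{j}_{\fa}(M))$ is artinian for all $i$, since $A/(\fa+\fb)$ is artinian by hypothesis.

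Because $\fa$ is principal, say $\fa = xA$, we have $\ara(\fa) \leq 1$, so $\lc^{j}_{\fa}(M) = 0$ for all $j \geq 2$. Hence only $j = 0$ and $j = 1$ require attention. For $j = 0$, the module $\lc^{0}_{\fa}(M) = \G_{\fa}(M)$ is a submodule of the finite module $M$ and is supported in $\V(\fa)$; it is therefore finite and consequently $\fa$--cofinite.

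The case $j = 1$ is the crux. I will apply $\Hom_A(A/\fa, -)$ to the short exact sequence
\[
0 \lo M/\G_{\fa}(M) \lo M_x \lo \lc^{1}_{\fa}(M) \lo 0.
\]
The module $M/\G_{\fa}(M)$ is finite, so $\Ext^{i}_A(A/\fa, M/\G_{\fa}(M))$ is finite for all $i$. On $M_x$ multiplication by $x$ is invertible, whereas multiplication by $x$ annihilates $\Ext^{i}_A(A/\fa, M_x)$ (since it annihilates $A/\fa$); these two observations combined force $\Ext^{i}_A(A/\fa, M_x) = 0$ for every $i$. The long exact Ext sequence then shows that $\Ext^{i}_A(A/\fa, \lc^{1}_{\fa}(M))$ is finite for every $i$, so $\lc^{1}_{\fa}(M)$ is $\fa$--cofinite. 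This cofiniteness of $\lc^{1}_{\fa}(M)$ is the only nonformal step, and it is essentially the classical result of Kawasaki for ideals of arithmetic rank one; the rest of the argument is a direct invocation of Proposition \ref{T:cofart1}.
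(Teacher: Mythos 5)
Your proposal is correct and follows the same route as the paper: both reduce the claim to the fact that $\lc^{j}_{\fa}(M)$ is $\fa$-cofinite for every $j$ when $\fa$ is principal, and then invoke Proposition \ref{T:cofart1}. You merely supply in full the standard argument (via the \v{C}ech sequence $0\lo M/\G_{\fa}(M)\lo M_x\lo \lc^{1}_{\fa}(M)\lo 0$) for this cofiniteness, which the paper simply asserts as known.
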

\begin{proof}
Note that $\lc^{j}_{\fa}(M)$ is $\fa$--cofinite for all $j$ when $\fa$ is principle.
\end{proof}
\begin{thm}\label{T:dim<1}
Let $\fa$ and $\fb$ be two ideals of $A$ such that $A/{\fa}+\fb$ is artinian. Let $M$ be a finite module such that
$\dim M/{\fa}M\leq 1$. Then $\lc^{i}_{\fb}(\lc^{j}_{\fa}(M))$ is an artinian module for all $i, j$.
\begin{proof}
Since $A/{\fa}+\fb$ is artinian, $\Supp_A(\lc^{j}_{\fa}(M))\subset \V(\fa+{\fb})$ and $\V(\fa+{\fb})$ consists of
finitely many maximal ideals. Thus we may assume that $(A,\fm)$ is local.

Passing to the ring $A/\Ann(M)$ from \cite[Theorem 1]{DM} and the change of rings principle \cite [Corollary 2.6]{Mel},
 we get that $\lc^{j}_{\fa}(M)$ is $\fa$--cofinite for all $j$. We use theorem \ref{T:cofart1} to deduce that
  $\lc^{i}_{\fb}(\lc^{j}_{\fa}(M))$ is artinian for all $i,j$
\end{proof}
\end{thm}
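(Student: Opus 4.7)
The strategy is to combine two tools already set up in the paper: first a reduction to the local case, and then a cofiniteness argument that feeds into Proposition \ref{T:cofart1}. The point is that Proposition \ref{T:cofart1} will deliver the conclusion immediately as soon as one knows $\lc^{j}_{\fa}(M)$ is $\fa$--cofinite, so the whole task is to establish that cofiniteness under the given dimension hypothesis.

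First I would observe that
$$\Supp_A\!\bigl(\lc^{i}_{\fb}(\lc^{j}_{\fa}(M))\bigr)\subseteq \V(\fa)\cap \V(\fb)=\V(\fa+\fb),$$
which, because $A/(\fa+\fb)$ is artinian, is a finite set of maximal ideals. Since a module whose support is a finite set of maximal ideals is artinian if and only if each of its localizations at those maximal ideals is artinian, one can replace $A$ by $A_{\fm}$ for one such $\fm$ and assume that $(A,\fm)$ is local. Every hypothesis survives this localization: $M_{\fm}$ is still finite, and $\dim M_{\fm}/\fa M_{\fm}\leq \dim M/\fa M\leq 1$.

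In the local case I would pass to the ring $\bar A=A/\Ann_A(M)$. Since $M$ is a faithful $\bar A$--module, the hypothesis $\dim M/\fa M\leq 1$ translates into $\dim \bar A/\fa\bar A\leq 1$, which is exactly the situation of the Delfino--Marley theorem \cite[Theorem 1]{DM}; hence $\lc^{j}_{\fa}(M)$ is $\fa\bar A$--cofinite as an $\bar A$--module for every $j$. Applying the change of rings principle \cite[Corollary 2.6]{Mel}, this promotes to $\fa$--cofiniteness over $A$. With this in hand, Proposition \ref{T:cofart1} (applied to our ideals $\fa$ and $\fb$, whose sum still yields an artinian quotient) gives that $\lc^{i}_{\fb}(\lc^{j}_{\fa}(M))$ is artinian for all $i$, finishing the argument.

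The only real obstacle is the cofiniteness of $\lc^{j}_{\fa}(M)$, and that is handled by citing Delfino--Marley after the quotient by $\Ann M$; the remaining moves -- localizing at each maximal ideal of $\V(\fa+\fb)$, transferring cofiniteness through \cite[Corollary 2.6]{Mel}, and invoking Proposition \ref{T:cofart1} -- are routine given the machinery already established in the paper.
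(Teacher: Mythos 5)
Your proof is correct and follows essentially the same route as the paper: reduce to the local case via the finiteness of $\V(\fa+\fb)$, pass to $A/\Ann_A(M)$ so that the dimension hypothesis becomes $\dim\bar A/\fa\bar A\le 1$, invoke Delfino--Marley to get $\fa$-cofiniteness of $\lc^{j}_{\fa}(M)$, transfer it back via the change-of-rings principle, and finish with Proposition~\ref{T:cofart1}. The only cosmetic difference is that you phrase the support observation in terms of $\lc^{i}_{\fb}(\lc^{j}_{\fa}(M))$ rather than $\lc^{j}_{\fa}(M)$, which is in fact the cleaner way to state it.
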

\begin{thm}\label{T:dim<2}
Let $\fa$ and $\fb$ be two ideals of $A$ such that $A/{\fa}+\fb$ is artinian. Let $M$ be a finite module such that
$\dim M/{\fa}M\leq 2$. Then $\lc^{1}_{\fb}(\lc^{i}_{\fa}(M))$ is an artinian module for all $i$.
\end{thm}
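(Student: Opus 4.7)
The plan is to reduce to the local case and then apply Grothendieck's spectral sequence, using the bound $\dim M/{\fa}M \leq 2$ to force all differentials touching the $(1,i)$-spot to vanish. First, since $\Supp \lc^{1}_{\fb}(\lc^{i}_{\fa}(M)) \subseteq \V(\fa)\cap\V(\fb) = \V(\fa+\fb)$ consists of finitely many maximal ideals ($A/(\fa+\fb)$ being artinian), a standard decomposition reduces us to the case that $(A,\fm)$ is local with $\fm=\sqrt{\fa+\fb}$; in particular $\fa\subseteq\fm$.

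The next key observation is that $\lc^{i}_{\fa}(M)$ is $\fa$-torsion, so for any $\fa$-torsion module $X$ a \v{C}ech-complex calculation (equivalently, the collapse of the Mayer--Vietoris spectral sequence on the $\fa$-torsion module $X$) yields $\lc^{p}_{\fb}(X)\cong\lc^{p}_{\fa+\fb}(X)\cong\lc^{p}_{\fm}(X)$ for every $p$. Thus it suffices to show $\lc^{1}_{\fm}(\lc^{i}_{\fa}(M))$ is artinian. For this I would invoke Grothendieck's spectral sequence
\[
E_2^{p,q}=\lc^{p}_{\fm}(\lc^{q}_{\fa}(M)) \Longrightarrow \lc^{p+q}_{\fm}(M),
\]
which arises because $\G_{\fm}\circ\G_{\fa}=\G_{\fm}$ (a consequence of $\fa\subseteq\fm$) and $\G_{\fa}$ preserves injectives. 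The abutment is artinian since $M$ is finite and $\fm$ is maximal. The crucial input is $\Supp \lc^{q}_{\fa}(M) \subseteq \V(\fa)\cap\Supp M$, giving $\dim \lc^{q}_{\fa}(M) \leq \dim M/{\fa}M \leq 2$; Grothendieck vanishing, extended to arbitrary modules via direct limits of finite submodules (since $\lc^{p}_{\fm}$ commutes with direct limits), then forces $E_2^{p,q}=0$ for every $p\geq 3$.

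Consequently every outgoing differential at $E_r^{1,i}$ (for $r\geq 2$) lands in $E_r^{1+r,\,i-r+1}$, which is a subquotient of $E_2^{1+r,\,i-r+1}=0$, and every incoming differential originates in $E_r^{1-r,\,i+r-1}=0$. Hence $E_\infty^{1,i}=E_2^{1,i}=\lc^{1}_{\fm}(\lc^{i}_{\fa}(M))$ sits as a subquotient of the artinian module $\lc^{1+i}_{\fm}(M)$, and so is itself artinian. The main obstacle I expect is the careful verification of the two auxiliary facts used silently above: that $\lc^{p}_{\fb}$ coincides with $\lc^{p}_{\fm}$ on $\fa$-torsion modules once $\fm=\sqrt{\fa+\fb}$, and that Grothendieck vanishing persists for non-finitely-generated modules. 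Both should reduce to routine \v{C}ech-complex manipulations.
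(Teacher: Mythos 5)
Your argument is correct, but it takes a genuinely different route from the paper's. The paper first reduces to the case $\G_{\fb}(M)=0$ by a long exact sequence argument, then chooses by prime avoidance an $M$-regular element $x\in\fb$ avoiding the dimension-two primes containing $\fa+\Ann(M)$, so that $\dim M/(\fa+xA)M\leq 1$; it then invokes Theorem~\ref{T:dim<1} and \cite[Lemma 3.1]{Mel} to handle $\Ker$ and $\Coker$ of multiplication by $x$ on $\lc^{i}_{\fa}(M)$, and concludes via the Melkersson artinianness criterion \cite[Theorem 7.1.2]{BSh}. You instead reduce to the local case, replace $\lc^{p}_{\fb}$ by $\lc^{p}_{\fm}$ on $\fa$-torsion modules, and exploit the Grothendieck spectral sequence $E_2^{p,q}=\lc^{p}_{\fm}(\lc^{q}_{\fa}(M))\Rightarrow\lc^{p+q}_{\fm}(M)$: Grothendieck vanishing (extended to non-finite modules by direct limits) kills $E_2^{p,q}$ for $p\geq 3$, forcing degeneration at the $(1,i)$-spot and exhibiting $\lc^{1}_{\fb}(\lc^{i}_{\fa}(M))$ as a subquotient of the artinian module $\lc^{1+i}_{\fm}(M)$. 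Your approach is shorter and more conceptual: it bypasses Theorem~\ref{T:dim<1} (indeed it reproves that result by the analogous degeneration argument when $\dim M/\fa M\leq 1$, where $E_2^{p,q}=0$ for $p\geq 2$) and needs neither the Melkersson criterion nor a prime avoidance element; the paper's proof is more elementary, using only long exact sequences. One small thing worth noting about your method: the same spectral sequence does not directly give artinianness of $\G_{\fb}(\lc^{i}_{\fa}(M))$ under the hypothesis $\dim M/\fa M\leq 2$, since the differential $d_2\colon E_2^{0,i}\to E_2^{2,i-1}$ need not vanish, so your degeneration argument is specific to the $p=1$ row — which happens to be exactly what the theorem asks for.
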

\begin{proof}
We first reduce to the case $\G_{\fb}(M)=0$. From the exact sequence
\begin{center}
$0\lo \G_{\fb}(M)\lo M\overset{f}\lo \overline{M}\lo 0$,
\end{center}
where $\overline{M}= M/\G_{\fb}(M)$, we get the exact sequence
\begin{center}
$\lc^{i}_{\fa}(\G_{\fb}(M))\lo \lc^{i}_{\fa}(M)\overset{\lc^{i}_{\fa}(f)}\lo \lc^{i}_{\fa}(\overline{M})
\lo \lc^{i+1}_{\fa}(\G_{\fb}(M)).$
\end{center}
Note that $\lc^{i}_{\fa}(\G_{\fb}(M))\cong \lc^{i}_{\fa+{\fb}}(\G_{\fb}(M))$, and so is artinian for all $i$. Since
$\Ker \lc^{i}_{\fa}(f)$ and $\Coker \lc^{i}_{\fa}(f)$ are artinian, it follows from \cite[Lemma 3.1]{Mel} that
$\Ker \lc^{1}_{\fb}(\lc^{i}_{\fa}(f))$ and $\Coker \lc^{1}_{\fb}(\lc^{i}_{\fa}(f))$ are artinian. Hence
\begin{center}
$\lc^{1}_{\fb}(\lc^{i}_{\fa}(M))$ is artinian if and only if $\lc^{1}_{\fb}(\lc^{i}_{\fa}(\overline{M}))$ is artinian.
\end{center}
Thus we may assume that $\G_{\fb}(M)=0$.

Take $x\in \fb$ outside all associated prime ideals of $M$ and
outside all prime ideals $\fp\supset \fa+\Ann(M)$, such that
$\dim A/\fp= 2$. Then $\dim M/(\fa+{xA})M\leq 1$ and $x$ is a
non-zerodivisor on $M$. Hence we get the long exact sequence
\begin{center}
$\dots\rightarrow \lc^{i-1}_{\fa}(M/{x}M)\rightarrow \lc^{i}_{\fa}(M)\overset{x}\rightarrow \lc^{i}_{\fa}(M)\rightarrow
 \lc^{i}_{\fa}(M/{x}M)\rightarrow \dots$.
\end{center}
Consider the map $f$ which is multiplication by $x$ on $\lc^{i}_{\fa}(M)$. We have the exact sequence
\begin{center}
$0\lo \lc^{i-1}_{\fa}(M)/{x}\lc^{i-1}_{\fa}(M)\lo \lc^{i-1}_{\fa}(M/{x}M)\lo \Ker f\lo 0$
\end{center}
and hence the exact sequence
\begin{center}
$\lc^{1}_{\fb}(\lc^{i-1}_{\fa}(M/{x}M))\lo \lc^{1}_{\fb}(\Ker f)\lo
 \lc^{2}_{\fb}(\lc^{i-1}_{\fa}(M)/{x}\lc^{i-1}_{\fa}(M))$.
\end{center}
However the last term is zero since
$$\Supp_A(\lc^{i-1}_{\fa}(M)/{x}\lc^{i-1}_{\fa}(M))\subset \Supp_A(M/(\fa+{xA})M)$$
 and $\dim M/(\fa+{xA})M\leq 1$. Consequently $\lc^{1}_{\fb}(\Ker f)$ is artinian, since
  $\lc^{1}_{\fb}(\lc^{i-1}_{\fa}(M/{x}M))$ is artinian by theorem \ref{T:dim<1}. From the exactness of
  $0\rightarrow \Coker f\rightarrow \lc^{i}_{\fa}(M/{x}M)$, we get the exactness of
  $0\rightarrow \G_{\fb}(\Coker f)\rightarrow \G_{\fb}(\lc^{i}_{\fa}(M/{x}M))$. Again we use \ref{T:dim<1} to deduce
  that $\G_{\fb}(\lc^{i}_{\fa}(M/{x}M)$ is artinan . Hence $\G_{\fb}(\Coker f)$ is artinian.

 Since we now have shown that $\lc^{1}_{\fb}(\Ker f)$ and $\G_{\fb}(\Coker f)$ are artinian, we are able to apply
  \cite[Lemma 3.1]{Mel} to deduce that $\Ker \lc^{1}_{\fb}(f)$ is artinian. But $\lc^{1}_{\fb}(f)$ is just multiplication
   by $x$ on $\lc^{1}_{\fb}(\lc^{i}_{\fa}(M))$. It follows from \cite[Theorem 7.1.2]{BSh} that
   $\lc^{1}_{\fb}(\lc^{i}_{\fa}(M))$ is artinian.
\end{proof}
 The following lemma is a generalization of \cite[Corollary
 1.8]{Mel1}.
\begin{lem}\label{L:cofart}
Let $\fa$ and $\fb$ be two ideals of $A$ such that $A/{\fa}+\fb$ is artinian. If $M$ is a module such that
$\Supp_A(M)\subset \V(\fa)$ and $0:_M{\fa}$ is finite, then $\G_{\fb}(M)$ is $\fa$--cofinite artinian.
\end{lem}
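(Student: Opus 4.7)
The plan is to reduce to the local case and then apply two Melkersson-type criteria in sequence, one yielding artinianness and the other $\fa$-cofiniteness.

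First, I would observe that
\[
\Supp_A(\G_\fb(M)) \subset \V(\fa)\cap \V(\fb) = \V(\fa+\fb),
\]
which consists of finitely many maximal ideals since $A/(\fa+\fb)$ is artinian. Both artinianness and $\fa$-cofiniteness are compatible with the resulting finite direct-sum decomposition of $\G_\fb(M)$ into its localizations at these maximal ideals (using that $A/\fa$ is finitely presented, so $\Ext^i_A(A/\fa,-)$ commutes with localization at each such $\fm$). I may therefore assume $(A,\fm)$ is local with $\fa+\fb$ being $\fm$-primary, so in particular $\fa\subset\fm$. Next, for any $x\in \G_\fb(M)$ the cyclic submodule $Ax$ is finite and supported only at $\fm$, hence annihilated by a power of $\fm$; thus $\G_\fb(M)$ is $\fm$-torsion.

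I would then estimate the socle: since $\fa\subset\fm$ and $\G_\fb(M)\subset M$,
\[
0:_{\G_\fb(M)}\fm \;\subset\; 0:_{\G_\fb(M)}\fa \;\subset\; 0:_M\fa,
\]
and the last module is finite by hypothesis. The Melkersson criterion that an $\fm$-torsion module is artinian precisely when its socle is finite then shows $\G_\fb(M)$ is artinian.

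Finally, for $\fa$-cofiniteness I would invoke the companion criterion: an artinian $A$-module $N$ with $\Supp_A(N)\subset\V(\fa)$ is $\fa$-cofinite provided $0:_N\fa$ is finite. This is in the spirit of \cite[Theorem 5.5]{Mel} already used in Proposition \ref{T:cofart1}. Applied to $N=\G_\fb(M)$, whose $\fa$-socle embeds into the finite module $0:_M\fa$, it yields the conclusion. The main obstacle is identifying the correct cofiniteness criterion that passes from finiteness of $0:_N\fa$ to full $\fa$-cofiniteness for an artinian module; once that criterion is in hand, the rest of the argument is routine support-and-socle bookkeeping.
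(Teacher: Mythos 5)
Your argument is essentially the paper's, and the ``correct cofiniteness criterion'' you are looking for does exist and in fact collapses your two-stage plan into one step: \cite[Proposition~4.1]{Mel} states that a module $N$ with $\Supp_A N\subset\V(\fa)$ is $\fa$-cofinite artinian if and only if $0:_N\fa$ has finite length. With that in hand the paper skips the local reduction and the $\fm$-torsion observation entirely: $0:_{\G_\fb(M)}\fa$ is a submodule of the finite module $0:_M\fa$, hence finite, and since its support lies in $\V(\fa+\fb)$, which is a finite set of maximal ideals, it actually has finite length; Proposition~4.1 then delivers both artinianness and $\fa$-cofiniteness at once. Your route is correct (the socle bound $0:_{\G_\fb(M)}\fm\subset 0:_M\fa$ and Melkersson's artinianness criterion for $\fm$-torsion modules are both valid, and the ``companion criterion'' you postulate is a special case of Proposition~4.1), but it spends effort on a local decomposition that the finite-length observation makes unnecessary.
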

\begin{proof}
The module $0:_{\G_{\fb}(M)}{\fa}\subset 0:_M{\fa}$ and is therefore finite. But the support of $0:_{\G_{\fb}(M)}{\fa}$
is contained in $\V({\fa+{\fb}})$, thus $0:_{\G_{\fb}(M)}{\fa}$ has finite length. Therefore $\G_{\fb}(M)$ is
$\fa$--cofinite artinian, by \cite[Proposition 4.1]{Mel}.
\end{proof}
\begin{cor}\label{C:cofart4}
Let $\fa$ and $\fb$ be two ideals of $A$ such that $A/{\fa}+\fb$ is artinian. If $M$ is a module such that
$$\Ext^n_{A}(A/\fa,M)\text{ and }\Ext^{n+1-j}_{A}(A/\fa, \lc^{j}_{\fa}(M))$$
 for all $j< n$, are finite, then $\G_{\fb}(\lc^{n}_{\fa}(M))$ is $\fa$--cofinite artinian.
\end{cor}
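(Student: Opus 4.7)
The plan is to reduce the statement to Lemma~\ref{L:cofart} by showing $\Hom_A(A/\fa,\lc^n_\fa(M))$ is finite, since this module equals $0:_{\lc^n_\fa(M)}\fa$ and $\Supp_A(\lc^n_\fa(M))\subset\V(\fa)$ is automatic. I would argue by induction on $n$.

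For $n=0$, one notes that $0:_{\G_\fa(M)}\fa = 0:_M\fa = \Hom_A(A/\fa,M)$ is finite by hypothesis, and applies Lemma~\ref{L:cofart} directly to $\lc^0_\fa(M)=\G_\fa(M)$.

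For $n\geq 1$, I would carry out two reduction steps. First, pass to $\bar M = M/\G_\fa(M)$; since $\lc^i_\fa(\bar M)\cong\lc^i_\fa(M)$ for $i\geq 1$, and since the long exact $\Ext^*_A(A/\fa,-)$--sequence attached to $0\to\G_\fa(M)\to M\to\bar M\to 0$, combined with finiteness of $\Ext^n_A(A/\fa,M)$ and of the $j=0$ case $\Ext^{n+1}_A(A/\fa,\lc^0_\fa(M))$ of the hypothesis, forces $\Ext^n_A(A/\fa,\bar M)$ to be finite, all of the corollary's hypotheses transfer to $\bar M$, and $\lc^n_\fa(\bar M)=\lc^n_\fa(M)$. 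So one may assume $\G_\fa(M)=0$. Second, embed $M$ in its injective envelope $E$: the associated primes of $M$ avoid $\V(\fa)$, so $E$ is a direct sum of modules $E(A/\fp)$ with $\fp\not\supset\fa$, giving $\Hom_A(A/\fa,E)=0$ and hence $\Ext^i_A(A/\fa,E)=0$ and $\lc^i_\fa(E)=0$ for all $i$. With $N=E/M$, dimension shifting yields $\lc^i_\fa(N)\cong\lc^{i+1}_\fa(M)$ and $\Ext^i_A(A/\fa,N)\cong\Ext^{i+1}_A(A/\fa,M)$ for all $i\geq 0$.

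The key observation is that the asymmetric-looking hypothesis $\Ext^{n+1-j}_A(A/\fa,\lc^j_\fa(M))$ for $j<n$ is built to be stable under this shift: the substitution $j'=j+1$ turns the condition $\Ext^{(n-1)+1-j}_A(A/\fa,\lc^j_\fa(N))$ for $j<n-1$ into $\Ext^{n+1-j'}_A(A/\fa,\lc^{j'}_\fa(M))$ for $1\leq j'<n$, and $\Ext^{n-1}_A(A/\fa,N)\cong\Ext^n_A(A/\fa,M)$ supplies the top-degree finiteness. Thus $N$ satisfies the hypotheses of the corollary with $n$ replaced by $n-1$; by induction $\G_\fb(\lc^{n-1}_\fa(N))$ is $\fa$-cofinite artinian, and $\lc^{n-1}_\fa(N)\cong\lc^n_\fa(M)$ finishes the argument.

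The main obstacle is purely bookkeeping: one must check that the shifted indices align with the form of the hypothesis, and that the $j=0$ clause is exactly what is needed to carry out the first reduction. Once this alignment is seen, the induction closes with no further computation.
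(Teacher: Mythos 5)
Your argument is correct, and it ends at the same place the paper does: establish that $\Hom_A(A/\fa,\lc^n_\fa(M))$ is finite, then invoke Lemma~\ref{L:cofart}. The difference is how that finiteness is obtained. The paper simply cites \cite[Theorem~6.3.9(b)]{DY1}, whereas you reprove what is needed from scratch by a dimension-shifting induction: first pass to $M/\G_\fa(M)$, using the $j=0$ clause of the hypothesis together with $\Ext^n_A(A/\fa,M)$ to transfer the top-degree finiteness; then embed in an injective hull $E$ with $\Gamma_\fa(E)=0$, so that $\Hom_A(A/\fa,E)=0$ and (by injectivity) all higher $\Ext^i_A(A/\fa,E)$ and $\lc^i_\fa(E)$ vanish, and observe that the cokernel $N=E/M$ satisfies the hypotheses with $n$ replaced by $n-1$. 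The index bookkeeping you carry out does confirm that the shape of the hypothesis, $\Ext^{n+1-j}_A(A/\fa,\lc^j_\fa(M))$ for $j<n$, is exactly stable under this shift, and the base case $n=0$ is immediate. One very small wording slip: "$\Hom_A(A/\fa,E)=0$ and \emph{hence} $\Ext^i_A(A/\fa,E)=0$" is not a logical implication; the vanishing of the higher $\Ext$'s comes from injectivity of $E$, while the $\Hom$-vanishing comes from the support of $E$. You clearly use both facts, so this is only a matter of phrasing. Overall your version buys self-containment at the cost of length, and it incidentally makes transparent why the exponent $n+1-j$ appears in the hypothesis, which the paper's citation leaves opaque.
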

\begin{proof}
Note that by \cite[Theorem 6.3.9 (b)]{DY1} $\Hom_{A}(A/\fa,\lc^{n}_{\fa}(M))$ is finite, hence by lemma
\ref{L:cofart}, $\G_{\fb}(\lc^{n}_{\fa}(M))$ is $\fa$--cofinite artinian.
\end{proof}
\begin{cor}\label{C:cofart2}
Let $\fa$ and $\fb$ be two ideals of $A$ such that $A/{\fa}+\fb$ is artinian. If $M$ is a finite module such that
 $\lc^{i}_{\fa}(M)$ is $\fa$--cofinite for $i< n$, then $\G_{\fb}(\lc^{i}_{\fa}(M))$ is $\fa$--cofinite artinian for
  all $i\leq n$.
\end{cor}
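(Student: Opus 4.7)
The plan is to prove the statement indexwise, splitting according to whether $i<n$ or $i=n$, and in each range reducing to a tool already established in the paper. For $i<n$ the hypothesis gives us $\fa$-cofiniteness of $\lc^i_\fa(M)$ for free, so Lemma \ref{L:cofart} can be applied directly; for $i=n$ the $\fa$-cofiniteness of $\lc^n_\fa(M)$ is not assumed, so I would feed the hypotheses into Corollary \ref{C:cofart4} instead.

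For $i<n$, the module $\lc^i_\fa(M)$ is $\fa$-cofinite by assumption, so $\Supp_A(\lc^i_\fa(M))\subset \V(\fa)$ and $0:_{\lc^i_\fa(M)}\fa\cong \Hom_A(A/\fa,\lc^i_\fa(M))$ is finite. Since $A/(\fa+\fb)$ is artinian, Lemma \ref{L:cofart} applied to $\lc^i_\fa(M)$ yields that $\G_\fb(\lc^i_\fa(M))$ is $\fa$-cofinite artinian.

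For $i=n$, I would invoke Corollary \ref{C:cofart4}. The two finiteness conditions there are straightforward to verify: $\Ext^n_A(A/\fa,M)$ is finite because $M$ is finite over the noetherian ring $A$, and for each $j<n$ the module $\Ext^{n+1-j}_A(A/\fa,\lc^j_\fa(M))$ is finite because $\lc^j_\fa(M)$ is $\fa$-cofinite. Corollary \ref{C:cofart4} then concludes that $\G_\fb(\lc^n_\fa(M))$ is $\fa$-cofinite artinian, completing the remaining case.

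There is no real obstacle in this argument; the only point requiring care is recognizing that the two ranges use different reductions. The range $i<n$ needs only the structural fact encoded in Lemma \ref{L:cofart}, while the edge case $i=n$ genuinely needs the Ext-chasing machinery of Corollary \ref{C:cofart4}, and the hypothesis on $\lc^j_\fa(M)$ for $j<n$ is tailored to provide exactly the Ext input that corollary demands.
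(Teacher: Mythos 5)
Your proof is correct and matches the paper's intent: the corollary is placed immediately after Corollary \ref{C:cofart4} precisely so that one can apply it. The only remark worth making is that the case split is not strictly necessary: applying Corollary \ref{C:cofart4} with $n$ replaced by $i$ for each $i\leq n$ handles all indices uniformly, since for $i<n$ the required finiteness of $\Ext^i_A(A/\fa,M)$ and of $\Ext^{i+1-j}_A(A/\fa,\lc^j_\fa(M))$ for $j<i$ follows from the same hypotheses. Your route through Lemma \ref{L:cofart} for the range $i<n$ is equally valid and arguably more direct there, so both presentations are fine.
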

\begin{thm}\label{T:cofart3}
Let $\fa$ and $\fb$ be two ideals of $A$ such that $A/{\fa}+\fb$ is artinian. For each finite module $M$, the modules
 $\G_{\fb}(\lc^{1}_{\fa}(M))$ and $\lc^{1}_{\fb}(\lc^{1}_{\fa}(M))$ are artinian.
\end{thm}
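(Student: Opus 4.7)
The first assertion is immediate from Corollary~\ref{C:cofart2}: since $M$ is finite, $\lc^0_\fa(M) = \G_\fa(M)$ is a finite and hence $\fa$--cofinite module, so taking $n = 1$ in that corollary makes $\G_\fb(\lc^i_\fa(M))$ $\fa$--cofinite artinian for $i \leq 1$; in particular $\G_\fb(\lc^1_\fa(M))$ is artinian.

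For $\lc^1_\fb(\lc^1_\fa(M))$, my plan is to mimic the strategy of the proof of Theorem~\ref{T:dim<2}. First I would reduce to the case $\G_\fb(M) = 0$: applying $\lc^*_\fa$ to $0 \to \G_\fb(M) \to M \to M/\G_\fb(M) \to 0$ and using that the finite $\fb$--torsion module $\G_\fb(M)$ satisfies $\lc^i_\fa(\G_\fb(M)) \cong \lc^i_{\fa+\fb}(\G_\fb(M))$, one sees that $\lc^i_\fa(\G_\fb(M))$ is artinian for every $i$ (its support sits in the finitely many maximal ideals of $\V(\fa+\fb)$, and after localizing at any such $\fm$ it becomes local cohomology of a finite module at the maximal ideal). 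Consequently the kernel and cokernel of the induced map $\lc^1_\fa(M) \to \lc^1_\fa(M/\G_\fb(M))$ are artinian, and \cite[Lemma 3.1]{Mel} transfers this through the functor $\lc^1_\fb$, allowing us to replace $M$ by $M/\G_\fb(M)$.

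Now assume $\G_\fb(M) = 0$ and pick $x \in \fb$ a non-zerodivisor on $M$. Writing $f$ for multiplication by $x$ on $\lc^1_\fa(M)$, the long exact sequence of $\lc^*_\fa$ arising from $0 \to M \xrightarrow{x} M \to M/xM \to 0$ exhibits $\Ker f$ as a quotient of $\G_\fa(M/xM)$, so $\Ker f$ is finite and $\fa$--torsion, while $\Coker f$ embeds into $\lc^1_\fa(M/xM)$. Being $\fa$--torsion, $\Ker f$ satisfies $\lc^1_\fb(\Ker f) \cong \lc^1_{\fa+\fb}(\Ker f)$, which is artinian by the same localization argument as above. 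On the other hand, Corollary~\ref{C:cofart2} applied to the finite module $M/xM$ yields that $\G_\fb(\lc^1_\fa(M/xM))$ is artinian, hence $\G_\fb(\Coker f) \hookrightarrow \G_\fb(\lc^1_\fa(M/xM))$ is artinian as well.

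Having arranged that $\lc^1_\fb(\Ker f)$ and $\G_\fb(\Coker f)$ are artinian, I would invoke \cite[Lemma 3.1]{Mel}, exactly as in the final step of the proof of Theorem~\ref{T:dim<2}, to deduce that $\Ker \lc^1_\fb(f)$ is artinian. Since $\lc^1_\fb(f)$ is multiplication by $x$ on $\lc^1_\fb(\lc^1_\fa(M))$, this means $0 :_{\lc^1_\fb(\lc^1_\fa(M))} x$ is artinian, and therefore so is the submodule $0 :_{\lc^1_\fb(\lc^1_\fa(M))} \fb$. Because $\lc^1_\fb(\lc^1_\fa(M))$ is $\fb$--torsion, \cite[Theorem~7.1.2]{BSh} then forces $\lc^1_\fb(\lc^1_\fa(M))$ to be artinian. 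The main obstacle is pinning down the two intermediate artinian claims on $\lc^1_\fb(\Ker f)$ and $\G_\fb(\Coker f)$, which in turn rely on the finiteness of $\G_\fa(M/xM)$ and on Corollary~\ref{C:cofart2} for $M/xM$; everything else is routine use of tools already developed in the paper.
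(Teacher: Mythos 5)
Your proof is correct, and at its core it is the same argument as the paper's: use Corollary~\ref{C:cofart2} for the first claim, then for the second build a long exact sequence from multiplication by an $M$--regular element $x$, bound $\Ker f$ and $\Coker f$ via $\G_{\fa}(M/xM)$ and $\lc^{1}_{\fa}(M/xM)$, pass to $\lc^{1}_{\fb}$ with \cite[Lemma 3.1]{Mel}, and finish with Melkersson's criterion \cite[Theorem 7.1.2]{BSh}. The one genuine divergence is where you take $x$: you reduce to $\G_{\fb}(M)=0$ and choose $x\in\fb$, while the paper reduces to $\G_{\fa}(M)=0$ and chooses $x\in\fa$. Both work, but the paper's choice is lighter: killing $\G_{\fa}(M)$ costs nothing since $\lc^{i}_{\fa}(M)\cong\lc^{i}_{\fa}(M/\G_{\fa}(M))$ for $i\ge 1$, so no auxiliary long exact sequence and appeal to \cite[Lemma 3.1]{Mel} is needed at the reduction stage, and moreover $\Ker f$ becomes exactly $\G_{\fa}(M/xM)$ rather than merely a quotient of it. Your variant compensates at the very end with a slightly more transparent application of Melkersson's criterion, since $\lc^{1}_{\fb}(\lc^{1}_{\fa}(M))$ is obviously $\fb$--torsion and $x\in\fb$; the paper's version instead relies on the (true, but unstated) fact that $\lc^{1}_{\fb}(\lc^{1}_{\fa}(M))$ is $\fa$--torsion. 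In short: same skeleton, with your extra reduction step being avoidable by taking $x\in\fa$ instead.
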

\begin{proof}
Corollary \ref{C:cofart2} with $n=1$ implies that $\G_{\fb}(\lc^{1}_{\fa}(M))$ is artinian. We may assume that
 $\G_{\fa}(M)=0$, so there is an $M$--regular element $x$ in $\fa$. From the exact sequence $0\rightarrow
 M\overset{x}\rightarrow M\rightarrow M/{x}M \rightarrow 0$, we get the exact sequence
\begin{equation}\label{E:exact}
0\lo \G_{\fa}(M/{x}M)\lo \lc^{1}_{\fa}(M)\overset{x}\lo \lc^{1}_{\fa}(M)\lo \lc^{1}_{\fa}(M/{x}M).
\end{equation}

Consider the map $f$ defined as multiplication with $x$ on $\lc^{1}_{\fa}(M)$ occuring in (\ref{E:exact}). We get
\begin{center}
$\lc^{1}_{\fb}(\Ker f)\cong \lc^{1}_{\fb}(\G_{\fa}(M/{x}M))\cong \lc^{1}_{\fa+{\fb}}(\G_{\fa}(M/{x}M))$,
\end{center}
which is artinian and the exact sequence
\begin{center}
$0\lo \G_{\fb}(\Coker f)\lo \G_{\fb}(\lc^{1}_{\fa}(M/{x}M))$
\end{center}
Since $\G_{\fb}(\lc^{1}_{\fa}(M/{x}M))$ artinian by \ref{C:cofart2}, $\G_{\fb}(\Coker f)$ is artinian. We can use
\cite[Lemma 3.1]{Mel} with $S=\G_{\fb}(-)$ and $T=\lc^{1}_{\fb}(-)$ to conclude that $\Ker \lc^{1}_{\fb}(f)$ is artinian.
 But $\lc^{1}_{\fb}(f)$ is multiplication by the element $x\in \fa$ on $\lc^{1}_{\fb}(\lc^{1}_{\fa}(M))$. Again using
  \cite[Theorem 7.1.2]{BSh}, we conclude that $\lc^{1}_{\fb}(\lc^{1}_{\fa}(M))$ is artinian.
\end{proof}
\begin{lem}\label{L:artkercok}
Let $f:L\lo M$ be $A$--linear and $\fc$ an ideal of $A$. Let
$\mathcal S$ be a Serre subcategory of the category of
$A$--modules. If $\Ker \lc^{j}_{\fc}(f)$ and $\Coker
\lc^{j}_{\fc}(f)$ are in $\mathcal S$ for all $j$, then for each
$i$:
\begin{center}
$\lc^{i+2}_{\fc}(\Ker f)$ is in $\mathcal S$ if and only if $\lc^{i}_{\fc}(\Coker f)$ is in $\mathcal S$.
\end{center}
\end{lem}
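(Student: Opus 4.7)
The plan is to factor $f$ through its image and organize everything around a single composite of connecting maps. Set $I:=\Image f$; this gives short exact sequences
\begin{center}
$0\lo \Ker f\lo L\lo I\lo 0$ \ and \ $0\lo I\lo M\lo \Coker f\lo 0$.
\end{center}
Write $\alpha_j:\lc^j_\fc(L)\lo \lc^j_\fc(I)$ and $\beta_j:\lc^j_\fc(I)\lo \lc^j_\fc(M)$ for the induced maps, so that $\lc^j_\fc(f)=\beta_j\alpha_j$, and write $\partial^*:\lc^{j-1}_\fc(I)\lo \lc^j_\fc(\Ker f)$, $\partial^{**}:\lc^{j-1}_\fc(\Coker f)\lo \lc^j_\fc(I)$ for the connecting homomorphisms of the two sequences.

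The central object is the composite
\begin{center}
$d:=\partial^*\circ\partial^{**}:\lc^i_\fc(\Coker f)\lo \lc^{i+2}_\fc(\Ker f).$
\end{center}
The whole strategy is to show that both $\Ker d$ and $\lc^{i+2}_\fc(\Ker f)/\Image d$ lie in $\mathcal S$. Once this is in place the equivalence is immediate: $\lc^{i+2}_\fc(\Ker f)\in\mathcal S$ iff $\Image d\in\mathcal S$, and because $\Image d\cong \lc^i_\fc(\Coker f)/\Ker d$, this is in turn equivalent to $\lc^i_\fc(\Coker f)\in\mathcal S$.

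For $\Ker d$ I would filter by $\Ker\partial^{**}$. Exactness of the long exact sequence of the second short exact sequence identifies $\Ker\partial^{**}$ with $\lc^i_\fc(M)/\Image\beta_i$, which is a quotient of $\Coker\lc^i_\fc(f)\in\mathcal S$. The remaining piece $\Ker d/\Ker\partial^{**}$ equals $\Image\partial^{**}\cap\Ker\partial^*=\Ker\beta_{i+1}\cap\Image\alpha_{i+1}=\alpha_{i+1}(\Ker\lc^{i+1}_\fc(f))$, a quotient of $\Ker\lc^{i+1}_\fc(f)\in\mathcal S$.

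For $\lc^{i+2}_\fc(\Ker f)/\Image d$ I would use the three-step filtration $\Image d\subset\Image\partial^*\subset\lc^{i+2}_\fc(\Ker f)$. The outer quotient $\lc^{i+2}_\fc(\Ker f)/\Image\partial^*\cong\Ker\alpha_{i+2}$ embeds into $\Ker\lc^{i+2}_\fc(f)\in\mathcal S$. The inner quotient $\Image\partial^*/\Image d$ is isomorphic to $\lc^{i+1}_\fc(I)/(\Image\alpha_{i+1}+\Ker\beta_{i+1})$, which $\beta_{i+1}$ identifies with the submodule $\Image\beta_{i+1}/\Image\lc^{i+1}_\fc(f)$ of $\Coker\lc^{i+1}_\fc(f)\in\mathcal S$. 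The hard part is purely this bookkeeping through the two long exact sequences; I do not expect any conceptual obstacle beyond it.
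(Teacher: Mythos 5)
Your proof is correct. It starts from the same decomposition the paper uses --- factoring $f$ through its image $I$, with the two short exact sequences $0\to\Ker f\to L\to I\to 0$ and $0\to I\to M\to\Coker f\to 0$ and their long exact sequences in $\lc^{\bullet}_{\fc}$ --- but it organizes the bookkeeping differently. The paper applies the kernel--cokernel exact sequence for the composition $\lc^j_{\fc}(f)=\lc^j_{\fc}(h)\circ\lc^j_{\fc}(g)$ to conclude that $\Ker\lc^j_{\fc}(g)$ and $\Coker\lc^j_{\fc}(h)$ lie in $\mathcal S$ and that $\Ker\lc^j_{\fc}(h)\in\mathcal S\Leftrightarrow\Coker\lc^j_{\fc}(g)\in\mathcal S$; it then reads off from the two long exact sequences that $\lc^i_{\fc}(C)\in\mathcal S\Leftrightarrow\Ker\lc^{i+1}_{\fc}(h)\in\mathcal S\Leftrightarrow\Coker\lc^{i+1}_{\fc}(g)\in\mathcal S\Leftrightarrow\lc^{i+2}_{\fc}(K)\in\mathcal S$. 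You instead build a single explicit map $d=\partial^{*}\circ\partial^{**}:\lc^i_{\fc}(\Coker f)\to\lc^{i+2}_{\fc}(\Ker f)$, the composite of the two connecting homomorphisms through $\lc^{i+1}_{\fc}(I)$, and show directly that $\Ker d$ and $\Coker d$ both lie in $\mathcal S$ (each is a two-step extension of subquotients of $\Ker\lc^j_{\fc}(f)$ or $\Coker\lc^j_{\fc}(f)$ for $j=i,i+1,i+2$); the equivalence then follows formally because $\mathcal S$ is Serre. All the identifications you use --- $\Ker\partial^{**}\cong\lc^i_{\fc}(M)/\Image\beta_i$ as a quotient of $\Coker\lc^i_{\fc}(f)$, $\partial^{**}(\Ker d)=\Ker\beta_{i+1}\cap\Image\alpha_{i+1}=\alpha_{i+1}(\Ker\lc^{i+1}_{\fc}(f))$, $\lc^{i+2}_{\fc}(\Ker f)/\Image\partial^{*}\cong\Ker\alpha_{i+2}\subseteq\Ker\lc^{i+2}_{\fc}(f)$, and $\Image\partial^{*}/\Image d\cong\lc^{i+1}_{\fc}(I)/(\Image\alpha_{i+1}+\Ker\beta_{i+1})\hookrightarrow\Coker\lc^{i+1}_{\fc}(f)$ via $\beta_{i+1}$ --- check out. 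Your version in fact gives a marginally sharper statement: it exhibits a concrete degree-$2$ comparison map between $\lc^i_{\fc}(\Coker f)$ and $\lc^{i+2}_{\fc}(\Ker f)$ whose kernel and cokernel lie in $\mathcal S$, from which the biconditional is a corollary; the paper's chain of equivalences gives the biconditional without producing such a map.
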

\begin{proof}
Let $K=\Ker f,\, I=\Image f$ and $C=\Coker f$. Factorize $f$ as $f=h\circ{g}$, where
\begin{center}
$0\lo K\lo L\overset{g}\lo I\lo 0$
\end{center}
\begin{center}
and
\end{center}
\begin{center}
$0\lo I\overset{h}\lo M\lo C\lo 0$
\end{center}
are exact. For each $j$, we have the exact sequence
\begin{center}
$0\rightarrow \Ker \lc^{j}_{\fc}(g)\rightarrow \Ker \lc^{j}_{\fc}(f)\rightarrow \Ker \lc^{j}_{\fc}(h)\rightarrow
 \Coker \lc^{j}_{\fc}(g)\rightarrow \Coker \lc^{j}_{\fc}(f)\rightarrow \Coker \lc^{j}_{\fc}(h)\rightarrow 0.$
\end{center}
The hypothesis implies that $\Ker \lc^{j}_{\fc}(g)$ and $\Coker
\lc^{j}_{\fc}(h)$ are in $\mathcal S$ for each $j$. For each $j$,
$\Ker \lc^{j}_{\fc}(h)$ is in $\mathcal S$ if and only if $\Coker
\lc^{j}_{\fc}(g)$ is in $\mathcal S$. Moreover there are exact
sequences
\begin{center}
$\lc^{i}_{\fc}(I)\overset{\lc^{i}_{\fc}(h)}\lo \lc^{i}_{\fc}(M)\lo \lc^{i}_{\fc}(C)\lo
\lc^{i+1}_{\fc}(I)\overset{\lc^{i+1}_{\fc}(h)}\lo \lc^{i+1}_{\fc}(M)$
\end{center}
\begin{center}
and
\end{center}
\begin{center}
$\lc^{i+1}_{\fc}(L)\overset{\lc^{i+1}_{\fc}(g)}\lo \lc^{i+1}_{\fc}(I)\lo \lc^{i+2}_{\fc}(K)\lo
 \lc^{i+2}_{\fc}(L)\overset{\lc^{i+2}_{\fc}(g)}\lo \lc^{i+2}_{\fc}(I).$
\end{center}
It follows that
$$\begin{matrix}
\lc^{i}_{\fc}(C)\in \mathcal S&\Leftrightarrow &\Ker \lc^{i+1}_{\fc}(h)\in \mathcal S\\
       &\Leftrightarrow &\Coker \lc^{i+1}_{\fc}(g)\in \mathcal S\\
       &\Leftrightarrow &\lc^{i+2}_{\fc}(K)\in \mathcal S&
\end{matrix}$$.
\end{proof}
\begin{thm}\label{T:cofart5}
Let $\fa$ and $\fb$ be two ideals of $A$ such that $A/{\fa}+\fb$ is artinian. Assume that $\ara(\fa)=2$. Then for each
 finite $A$--module $M$, the module $\lc^{i}_{\fb}(\lc^{2}_{\fa}(M))$ is artinian if and only if the module
 $\lc^{i+2}_{\fb}(\lc^{1}_{\fa}(M))$ is artinian.
\end{thm}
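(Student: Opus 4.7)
The plan is to derive the equivalence from the composition-of-functors spectral sequence
$$E_2^{p,q} = \lc^p_\fb(\lc^q_\fa(M)) \implies \lc^{p+q}_{\fa+\fb}(M),$$
extracting all of its content from a single $d_2$ differential.

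Two preliminary inputs set up the argument. Because $A/(\fa+\fb)$ is artinian, $V(\fa+\fb)$ is a finite set of maximal ideals; for any finite $A$-module $N$ this forces $\lc^n_{\fa+\fb}(N)$ to be artinian in every degree, since the module is supported on finitely many closed points $\fm$ where $(\fa+\fb)_\fm$ is $\fm A_\fm$-primary, reducing each local summand to the classical artinianness of local cohomology of a finite module over a local ring at its maximal ideal. Applied to $M$ this makes the abutment artinian for all $(p,q)$; applied to $\G_\fa(M)$ (a finite submodule of $M$ with support in $V(\fa)$, so that $\lc^p_\fb(\G_\fa M) = \lc^p_{\fa+\fb}(\G_\fa M)$) this makes the entire row $E_2^{p,0}$ artinian. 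Meanwhile $\ara(\fa)=2$ gives $\lc^q_\fa(M)=0$ for $q\geq 3$, so only $q\in\{0,1,2\}$ contribute; in particular $d_r=0$ for $r\geq 4$ (its target has $q-r+1\leq -1$), so $E_\infty=E_4$ and every $E_\infty^{p,q}$ is a subquotient of the artinian abutment.

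Now I would read off the columns $q=2$ and $q=1$. On $q=2$, $E_2^{i,2} = \lc^i_\fb(\lc^2_\fa(M))$ is filtered as $E_\infty^{i,2}\subset E_3^{i,2}\subset E_2^{i,2}$ with quotients $\Image(d_3^{i,2})\subset E_2^{i+3,0}$ (artinian) and $\Image(d_2^{i,2})\subset E_2^{i+2,1}$; since $E_\infty^{i,2}$ is artinian, $\lc^i_\fb(\lc^2_\fa(M))$ is artinian if and only if $\Image(d_2^{i,2})$ is artinian. Analogously on $q=1$: the outgoing $d_2^{i+2,1}$ from $E_2^{i+2,1} = \lc^{i+2}_\fb(\lc^1_\fa(M))$ lands in the artinian $E_2^{i+4,0}$, and $E_\infty^{i+2,1} = \Ker(d_2^{i+2,1})/\Image(d_2^{i,2}) = E_3^{i+2,1}$ is artinian, so $\lc^{i+2}_\fb(\lc^1_\fa(M))$ is artinian if and only if $\Image(d_2^{i,2})$ is artinian. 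The two conditions coincide, which is the claim.

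The main obstacle will be the spectral-sequence bookkeeping — verifying that in each of the two filtrations the image of the single differential $d_2^{i,2}$ is the only subquotient not automatically artinian. An essentially equivalent route, closer in spirit to the paper, is to apply Lemma \ref{L:artkercok} (after reducing to $\G_\fa(M)=0$ using that $\G_\fa M$ is finite with support in $V(\fa)$) to the truncated \v{C}ech map $g:(M_x\oplus M_y)/M\to M_{xy}$ with $\sqrt{(x,y)}=\sqrt{\fa}$: then $\Ker g=\lc^1_\fa(M)$ and $\Coker g=\lc^2_\fa(M)$, and the hypothesis of the lemma packages exactly the artinianness of the auxiliary subquotients isolated above.
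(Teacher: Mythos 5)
Your main argument via the Grothendieck spectral sequence $E_2^{p,q}=\lc^p_\fb(\lc^q_\fa(M))\Rightarrow\lc^{p+q}_{\fa+\fb}(M)$ is correct and takes a genuinely different route from the paper. The paper reduces to $\G_{\fa}(M)=0$, picks $x,y$ with $\sqrt{(x,y)}=\sqrt\fa$, and applies Lemma~\ref{L:artkercok} to the map $f\colon \lc^{1}_{xA}(M)\to\lc^{1}_{xA}(M_y)$ from the comparison sequence of \cite[Proposition 8.1.2]{BSh}, checking the hypotheses of that lemma via the $xA$-cofiniteness of $\lc^{1}_{xA}(M)$ and Proposition~\ref{T:cofart1}. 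You instead read everything off the spectral sequence: with only three nonzero rows (since $\ara(\fa)=2$), the artinian abutment, and the artinian bottom row $E_2^{p,0}=\lc^p_{\fa+\fb}(\G_\fa(M))$, the image of $d_2^{i,2}$ is the single subquotient controlling the artinianness of both $E_2^{i,2}$ and $E_2^{i+2,1}$. Your route is cleaner and more conceptual: it avoids the cofiniteness machinery and the choice of generators, and it makes explicit that $\ara(\fa)=2$ enters only to truncate the $E_2$-page to three rows. The closing aside about applying Lemma~\ref{L:artkercok} directly to the \v{C}ech map $g\colon (M_x\oplus M_y)/M\to M_{xy}$ is, however, not what the paper does and would need more justification: verifying that $\Ker\lc^j_\fb(g)$ and $\Coker\lc^j_\fb(g)$ are artinian is not straightforward when the source and target are non-finite modules, which is precisely why the paper works instead with $f$ between $xA$-cofinite modules, where Proposition~\ref{T:cofart1} becomes available.
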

\begin{proof}
We may assume that $\G_{\fa}(M)=0$. Then $\fa$ can be generated
by $M$--regular elements $x$ and $y$ on $M$. (See \cite[Exercise
16.8]{Mat}). Then there is an exact sequence \cite[Proposition
8.1.2]{BSh}
\begin{center}
$0\lo \lc^{1}_{\fa}(M)\lo \lc^{1}_{xA}(M)\overset{f}\lo \lc^{1}_{xA}(M_y)\lo \lc^{2}_{\fa}(M)\lo 0.$
\end{center}
In order to apply lemma \ref{L:artkercok} we prove that $\Ker \lc^{j}_{\fb}(f)$ and $\Coker \lc^{j}_{\fb}(f)$ are
artinian for all $j$. By \cite[Proposition 8.1.2]{BSh}, there is an exact sequence
\begin{center}
$\lc^{j}_{\fb+{y}A}(\lc^{1}_{xA}(M))\lo \lc^{j}_{\fb}(\lc^{1}_{xA}(M))\overset{\lc^{j}_{\fb}(f)}\lo
 \lc^{j}_{\fb}(\lc^{1}_{xA}(M)_y)\lo \lc^{j+1}_{\fb+{y}A}(\lc^{1}_{xA}(M)).$
\end{center}
Since $\lc^{1}_{xA}(M)$ is ${xA}$--cofinite, the outer modules are artinian, by \ref{T:cofart1}. Hence
$\Ker \lc^{j}_{\fb}(f)$ and $\Coker \lc^{j}_{\fb}(f)$ are artinian for all $j$.
\end{proof}

\providecommand{\bysame}{\leavevmode\hbox
to3em{\hrulefill}\thinspace}

\end{document}